\theoremstyle{thmstyleone}%
\newtheorem{theorem}{Theorem}
\newtheorem{lemma}{Lemma}
\begin{document}

\title[Efficient LP warmstart for linear modifications of the constraint matrix]{Efficient LP warmstart for linear modifications of the constraint matrix}

\author*[1]{\fnm{Guillaume} \sur{Derval}}\email{gderval@uliege.be}

\author[1]{\fnm{Bardhyl} \sur{Miftari}}\email{bmiftari@uliege.be}

\author[1]{\fnm{Damien} \sur{Ernst}}\email{dernst@uliege.be}

\author[1]{\fnm{Quentin} \sur{Louveaux}}\email{q.louveaux@uliege.be}

\affil*[1]{\orgdiv{Montefiore Institute}, \orgname{University of Liège}, \city{Liège},\country{Belgium}}

\abstract{
We consider the problem of computing the optimal solution and objective of a linear program under linearly changing linear constraints. The problem studied is given by $\min c^t x \text{ s.t } Ax + \lambda Dx \leq b$ where $\lambda$ belongs to a set of predefined values $\Lambda$. Based on the information given by a precomputed basis, we present three efficient LP warm-starting algorithms. Each algorithm is either based on the eigenvalue decomposition, the Schur decomposition, or a tweaked eigenvalue decomposition to evaluate the optimal solution and optimal objective of these problems. The three algorithms have an overall complexity $O(pm^2+pmn)$ where $m$ (resp. $n$) is the number of constraints (resp. variables) of the original problem and $p$ the number of values in $\Lambda$ after an initial preprocessing step. We also provide theorems related to the optimality conditions to verify when a basis is still optimal and a local bound on the objective. }

\keywords{Linear Programming, Warmstart, Decomposition, Parametric Programming}
\pacs[MSC Classification]{90C05, 90C31, 90C46}




\maketitle
\section{Introduction}

Many real-life linear programs do not have a one-value fit for some of their coefficients. Typically, several values for these coefficients may be valid depending on the setting studied and hypotheses made. Sometimes, these programs evolve over time, which can also lead to changes in the coefficients. Assessing the behavior of both the optimal objective function and the optimal solution in relation to these changes is particularly important. This issue has been addressed by the fields of sensitivity analysis and parametric programming.

An efficient method for dealing with varying coefficients, particularly when they are in the objective function or in the right-hand side, is warmstarting. Indeed, in these cases, either primal or dual feasibility is preserved, which allows us to run a few iterations of either the primal or dual simplex.
However, when dealing with varying constraint matrix coefficients, the basic matrix changes, which imposes another computational burden. 
In this paper, we propose warm-starting algorithms to evaluate the optimal objective function and optimal solution in an exact manner for LPs whose constraint coefficients varies linearly.

We study problems of the type 
\begin{align}
    \mathcal{P}(\lambda)\equiv\min &\  c^t x\\
    {s.t } &\  Ax+\lambda Dx = b\nonumber\\
    & \ x \ge 0 \nonumber
\end{align}
where $\lambda$ controls the linear change in coefficients and belongs to a finite discrete set $\Lambda$ and $D$ is the uncertainty matrix impacted by $\lambda$. More precisely, we study how to use the information given by a precomputed optimal basis at a given $\lambda$ to warmstart the computation of nearby optimal solutions.

Given a basis $B$, let $x_B$ be the basic variables and $x_N$ the non-basic variables:
\begin{align}
    x_B(\lambda) &= (A_B + \lambda D_B)^{-1} b &
    x_N(\lambda) &= 0.
\end{align}
where, for a given matrix $M$, $M_B$ and $M_N$ denote the basic and nonbasic partition. The basis thus forms a potential solution to $\mathcal{P}(\lambda)$, at least if $A_B + \lambda D_B$ is invertible. A basis is said to be valid for a given $\lambda$ if the matrix is indeed invertible and $x_B(\lambda) \geq 0$, and optimal if it provides an optimal solution. In this case, the optimal objective $o^*(\lambda)$ is
\begin{align}
    o^*(\lambda) = o_B(\lambda) := c^t_B(A_B + \lambda D_B)^{-1} b = c^t_B x_B(\lambda).
\end{align}

The core of the paper revolves around finding methods to reduce the cost of solving the system $(A_B + \lambda D_B)x_B(\lambda)=b$ repetitively for various $\lambda$, in order to compute $x_B(\lambda)$ and $o^{\star}(\lambda)$.

\subsection{Related works}
As mentioned above, most articles in the literature tackle the sensitivity analysis of the objective function $c$ or the term on the right side $b$ \cite{jansen,gal_multi,Gal-1994,obj_param1,obj_param2}. Only a few papers deal with the problem of assessing the behavior of the optimum and the behavior of the optimal solution for a varying constraint matrix. In Sherman and Morrison \cite{sherman}, they consider the matrix $A$ and its inverse $A^{-1}$ as known and provide an algorithm to compute the inverse of $A_\epsilon^{-1}$ where $A_\epsilon$ is the matrix $A$ with a change of one entry. In another article \cite{Sherman2}, the same authors provide a formula to recompute the inverse of a matrix $A$ upon which a rank-1 modification is applied. In our case, this would be equivalent to considering $D=u^tv$ where $u$ and $v$ are two vectors. Woodbury \cite{woodbury} generalizes the formula of Sherman and Morrison and considers any change $D=CVU$ which in our case would lead to $\lambda$ dependent matrices and ultimately would require recomputing their inverse for every $\lambda \in \Lambda$. More recently, Miftari et al \cite{nous} provide methods to compute upper and lower bounds on the objective function for our class of problems. 

The algorithm most related to our contribution is presented in \cite{Zuidwijk}, where Zuidwijk provides an exact algorithm to calculate $o_B(\lambda)=c^t(A_B + \lambda D_B)^{-1}b$. By using realization theory \cite{bart}, he derives the following formula: 
\begin{align}
o_B(\lambda) = \frac{1}{\lambda} \big(\prod_{j=1}^m \frac{1+\lambda\beta_j}{1+\lambda\alpha_j}-1\big)
\end{align}
where $\alpha_j$ and $\beta_j$ are the eigenvalues of $A_B^{-1}D_B$ and $A_B^{-1}(D_B+bc_B^t)$, respectively. To prove the correctness of this algorithm, the method needs to check three conditions. For each of these conditions, it needs to compute the eigenvalue decomposition $n$ times leading to an overall preprocessing step in $\mathcal(n^{\omega+1})$ assuming matrix-matrix multiplication is $\mathcal{O}(n^\omega)$.

Most operations on matrices (matrix-matrix multiplication, inversion, system solve, eigenvalue decomposition, QR algorithm, ...) are done in $\mathcal{O}(n^\omega)$ \cite{Demmel_2007}. In practical applications, $\omega$ is generally $3$, but there exist galactic algorithms which can reduce $\omega$ to $\approx 2.371$.

\subsection{Our contribution}
We provide algorithms to compute the exact value of an optimal solution $x_B(\lambda)$ and the optimum value $o^*(\lambda)$ of problem $P(\lambda)$ around an optimal basis $B$ for a given a set of values $\lambda \in \Lambda$. The algorithms are based on three different reformulations of the term $(A_B + \lambda D_B)^{-1}$. These reformulations limit the computational complexity of finding $x_B(\lambda)$ and $o_B(\lambda)$ for new $\lambda$. More precisely, they avoid the need to invert large matrices or solve large systems of unknown structure for every $\lambda\in\Lambda$. The three algorithms have different trade-offs, but all share a global runtime to compute $p=\lvert \Lambda \rvert$ different points $(x_B(\lambda),\ o^*(\lambda))$ of $\mathcal{O}(m^\omega + pm^2)$, with $m$ the number of constraints in $\mathcal{P}(\lambda)$. In addition, we present how each of these algorithms ensures that $B$ is valid and optimal for each $\lambda \in \Lambda$.

In addition to being able to compute the optimal solution for multiple points, we also provide a bound on the objective function in the neighborhood of a given $\lambda$.

Together, these techniques allow us to compute a piecewise linear approximation of the function $o^*(\lambda)\ \forall \lambda \in [\underline{\lambda},\overline{\lambda}]$ under a given precision. 


\section{Decomposition of $A_B$}\label{sec:decomposition_AB}
Let us assume that a first LP optimization of $\mathcal{P}(0)$ has been performed - we assume that $\lambda=0$ is the nominal value. The underlying solver can thus provide an optimal basis $B$ at a negligible cost. It should be noted that the solver maintains a (typically LU) decomposition of $A_B$, the basis matrix, which can be used to solve the related linear systems in (at most, using the LU decomposition) $\mathcal{O}(n^2)$. We then have 
\begin{align}
    x_B(0) &= A_B^{-1}b, & o^*(0) &= c^t_BA_B^{-1}b.
\end{align}
Let us temporarily assume that the basis $B$ is still optimal for a given $\lambda$. We can rewrite $x_B$ as
\begin{align}
    x_B(\lambda) &= (A_B + \lambda D_B)^{-1}b \\
    &= (A_B(I + \lambda A_B^{-1}D_B))^{-1}b\\
    &= (I + \lambda A^{-1}_BD_B)^{-1}A^{-1}_Bb = (I + \lambda A^{-1}_B D_B)^{-1}x_B(0).
    \label{eq:xb_fct_x0}
\end{align}
As is, computing $x_B(\lambda)$ is difficult. It requires solving a system with the matrix $(I + \lambda A^{-1}_B D_B)$ as left-handside, which is $\mathcal{O}(m^\omega)$, for any new value of $\lambda$, assuming an initial problem $\mathcal{P}(\lambda)$ with $m$ constraints.

To ease the notation, in the following, we rewrite $A^{-1}_BD_B$ as the matrix $E_B$. In the following subsections, we decompose the matrix $E_B$ in order to compute $x_B(\lambda)$ in a fast way.

\subsection{Eigendecomposition}
If the matrix $E_B$ is diagonalizable, then one can use its eigendecomposition $Q\Sigma Q^{-1}=E_B$, with $Q$ a full-rank matrix of the eigenvectors of $E_B$ and $\Sigma$ a diagonal matrix containing the associated eigenvalues. Computing the eigendecomposition can be done in $\mathcal{O}(n^\omega)$ \cite{Demmel_2007}.

\begin{theorem}
    Given $B$ such that $E_B=A_B^{-1}D_B$ diagonalizable, let $Q\Sigma Q^{-1}=E_B$ be its eigendecomposition. Then, for any $\lambda$ such that $I + \lambda E_B$ is invertible,
    \begin{align}
        x_B(\lambda) = Q(I + \lambda \Sigma)^{-1}Q^{-1}x_B(0).
    \end{align}
    \label{theorem:eigendecomposition}
\end{theorem}%
\begin{proof}
    By definition of diagonalizability, $E_B$ can be eigendecomposed in $Q\Sigma Q^{-1}$. Starting from equation \eqref{eq:xb_fct_x0}, we obtain:
    \begin{align}
        x_B(\lambda) &= (I + \lambda E_B)^{-1}x_B(0) \\
        &= (I + \lambda Q\Sigma Q^{-1})^{-1}x_B(0) = (QQ^{-1} + \lambda Q\Sigma Q^{-1})^{-1}x_B(0) \\
        &= Q(I + \lambda \Sigma)^{-1}Q^{-1}x_B(0).\qquad
    \end{align}
\end{proof}

The system solving/inversion operation is only performed on a diagonal matrix, which allows us to speed up the computation.

\begin{theorem}
    Given $B$ such that $E_B$ is diagonalizable, and $Q\Sigma Q^{-1}$ its eigendecomposition. For a given $\lambda$, $x_B(\lambda)$ can be computed in $\mathcal{O}(m^2)$ and $o_B(\lambda)$ in $\mathcal{O}(m)$.
\end{theorem}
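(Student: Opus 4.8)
The plan is to separate the cost that genuinely depends on $\lambda$ from the one-time preprocessing, and to show that, once the eigendecomposition $Q\Sigma Q^{-1}$ of $E_B$ is available (as assumed in the hypothesis), each fresh value of $\lambda$ requires only a diagonal scaling followed by a single matrix--vector product. Starting from Theorem~\ref{theorem:eigendecomposition}, I would first precompute, once and for all, the vector $y := Q^{-1}x_B(0)$. This is a single linear solve against $Q$ and costs $\mathcal{O}(m^\omega)$, but it is independent of $\lambda$ and is therefore charged to preprocessing rather than to the per-$\lambda$ evaluation.

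For a fixed $\lambda$, I would write $x_B(\lambda) = Q\,(I+\lambda\Sigma)^{-1} y$. Since $\Sigma = \mathrm{diag}(\sigma_1,\dots,\sigma_m)$ is diagonal, the intermediate vector $z$ with entries $z_i = y_i/(1+\lambda\sigma_i)$ is formed in $\mathcal{O}(m)$ scalar operations. The remaining step $x_B(\lambda) = Qz$ is one $m\times m$ matrix--vector product, which is $\mathcal{O}(m^2)$ and dominates, yielding the claimed $\mathcal{O}(m^2)$ bound for the optimal solution.

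For the objective I would avoid forming $x_B(\lambda)$ at all by folding the left factor into the cost vector. Setting $w^t := c_B^t Q$ (a one-time $\mathcal{O}(m^2)$ preprocessing step), we have $o_B(\lambda) = c_B^t x_B(\lambda) = w^t (I+\lambda\Sigma)^{-1} y = \sum_{i=1}^m w_i y_i/(1+\lambda\sigma_i)$. Precomputing the scalars $\gamma_i := w_i y_i$ once more, each evaluation reduces to the sum $\sum_{i=1}^m \gamma_i/(1+\lambda\sigma_i)$, which is $\mathcal{O}(m)$.

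The only delicate point is the accounting: taken in strict isolation, a single $\lambda$ still hides the $\mathcal{O}(m^\omega)$ cost of the eigendecomposition and of the solve $Q^{-1}x_B(0)$, so the statement must be read in the amortized sense made explicit in the abstract, where these $\lambda$-independent quantities are computed once and reused across all $p$ values of $\Lambda$. I expect the main obstacle to be stating this preprocessing/per-query split cleanly, rather than any genuine difficulty in the bounds themselves; the per-$\lambda$ arithmetic is routine once $y$, $w$, and the $\gamma_i$ are in hand.
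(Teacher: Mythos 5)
Your proposal is correct and follows essentially the same route as the paper's proof: precompute $c_B^tQ$ and $Q^{-1}x_B(0)$ once, invert the diagonal factor $(I+\lambda\Sigma)$ in $\mathcal{O}(m)$ per $\lambda$, finish the solution with one matrix--vector product in $\mathcal{O}(m^2)$, and collapse the objective to a weighted sum $\sum_i \gamma_i/(1+\lambda\sigma_i)$ in $\mathcal{O}(m)$. The only cosmetic difference is that you charge $\mathcal{O}(m^\omega)$ for forming $Q^{-1}x_B(0)$ where the paper charges $\mathcal{O}(m^2)$ (since $Q^{-1}$ comes with the decomposition), but either way this is $\lambda$-independent preprocessing and does not affect the per-$\lambda$ bounds the theorem claims.
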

\begin{proof}
    Inverting a diagonal matrix of size $m \times m$ has a complexity of $\mathcal{O}(m)$. Once the inversion is done, each multiplication can be performed beforehand, for instance $Q^{-1}x_B(0)$, or on-the-spot for a new $\lambda$ in $\mathcal{O}(m^2)$ as they all imply matrix/vector multiplications. 
    
    Overall the complexity of computing $x_B(\lambda)$ is $\mathcal{O}(m^2)$ per $\lambda$. To compute the objective $o^*_B(\lambda)=c^t_BQ(I + \lambda \Sigma)^{-1}Q^{-1}x_B(0)$, we can reduce the complexity even more. Indeed, the vectors $c^t_B Q$ and $Q^{-1} x_B(0)$ can be precomputed once (in $\mathcal{O}(m^2)$). The last step of the algorithm becomes a vector/vector multiplication which is $\mathcal{O}(m)$, leading to a complexity of $\mathcal{O}(m)$ every new value of $\lambda$. \qquad
\end{proof}

\subsection{Schur decomposition}
In general, $E_B$ may be non-diagonalizable and the eigendecomposition may not exist. This case happens in practice. 

A more generic approach is to apply the Schur decomposition $E_B=QUQ^H$, where $Q$ is a unitary matrix, $Q^H$ its conjugate transpose ($QQ^H=I$) and $U$ an upper triangular matrix.
\begin{theorem}
    Given $B$ and $E_B$, let $Q U Q^{H}=E_B$ be its Schur decomposition. Then, for any $\lambda$ such that $I + \lambda E_B$ is invertible,
    \begin{align}
        x_B(\lambda) = Q(I + \lambda U)^{-1}Q^{H}x_B(0).
    \end{align}
\end{theorem}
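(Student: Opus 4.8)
The plan is to mirror exactly the argument used for the eigendecomposition case in Theorem~\ref{theorem:eigendecomposition}, since the Schur decomposition plays the same structural role as the eigendecomposition but substitutes the unitary matrix $Q$ with $Q^H$ in place of $Q^{-1}$, and an upper triangular $U$ in place of the diagonal $\Sigma$. First I would start from equation \eqref{eq:xb_fct_x0}, namely $x_B(\lambda) = (I + \lambda E_B)^{-1} x_B(0)$, and substitute the Schur decomposition $E_B = Q U Q^H$ into the parenthesized matrix.

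The key algebraic step is to rewrite the identity $I$ as $Q Q^H$, which is valid precisely because $Q$ is unitary ($Q Q^H = I$). This lets me factor
\begin{align*}
    I + \lambda E_B &= Q Q^H + \lambda Q U Q^H = Q(I + \lambda U)Q^H,
\end{align*}
so that, taking inverses and using $(XYZ)^{-1} = Z^{-1}Y^{-1}X^{-1}$ together with $Q^{-1} = Q^H$ and $(Q^H)^{-1} = Q$, I obtain $(I + \lambda E_B)^{-1} = Q(I + \lambda U)^{-1} Q^H$. Applying this to $x_B(0)$ yields the claimed formula.

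I would then note that the inverse $(I + \lambda U)^{-1}$ is well-defined under the stated hypothesis that $I + \lambda E_B$ is invertible: since $Q$ is unitary and hence nonsingular, the product $Q(I + \lambda U)Q^H$ is invertible if and only if $I + \lambda U$ is, so no extra assumption is needed. The only point requiring mild care is the direction of the unitary substitution—ensuring that $Q^{-1}$ is replaced by $Q^H$ and not the reverse—but this is immediate from unitarity rather than a genuine obstacle. In truth there is no hard part here: the proof is a one-line matrix factorization that exploits $Q Q^H = I$, and the argument is essentially identical to the diagonalizable case, the only substantive difference being that $U$ is merely upper triangular rather than diagonal (which matters for the later complexity discussion but not for this correctness statement).
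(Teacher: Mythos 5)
Your proof is correct and follows exactly the route the paper intends: the paper's own proof of this theorem is simply ``Similar to the one of Theorem~\ref{theorem:eigendecomposition},'' i.e., substituting $E_B = QUQ^H$, rewriting $I$ as $QQ^H$ by unitarity, and factoring $I + \lambda E_B = Q(I+\lambda U)Q^H$ before inverting. Your additional remark that $I+\lambda U$ is invertible exactly when $I+\lambda E_B$ is (since $Q$ is nonsingular) is a welcome bit of care that the paper leaves implicit.
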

\begin{proof}
    Similar to the one of Theorem \ref{theorem:eigendecomposition}.
\end{proof}
Computing the Schur decomposition can be done in $\mathcal{O}(n^\omega)$ \cite{Demmel_2007}. Once it has been computed, the reduction of complexity amounts now to solve a triangular system for each $\lambda$.
\begin{theorem}
    Given $E_B$, and $Q U Q^{H}$ its Schur decomposition. For a given $\lambda$, $x_B(\lambda)$ can be computed in $\mathcal{O}(m^2)$ and $o_B(\lambda)$ in $\mathcal{O}(m^2)$.
\end{theorem}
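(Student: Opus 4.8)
The plan is to mirror the argument used for the eigendecomposition, the only difference being that the $\lambda$-dependent factor $(I + \lambda U)^{-1}$ now acts through an upper triangular matrix rather than a diagonal one. First I would record the structural fact on which everything rests: since $U$ is upper triangular, so is $I + \lambda U$ (adding the identity and scaling by $\lambda$ affect only the diagonal and the scaling of the existing entries, preserving triangularity). This is what lets us replace a generic inversion by a back-substitution.

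Next I would split the work into a one-time preprocessing phase and a per-$\lambda$ phase. Independently of $\lambda$, the vector $Q^{H} x_B(0)$ is formed once by a single matrix-vector product in $\mathcal{O}(m^2)$, and similarly the row vector $c_B^t Q$ needed for the objective. Then, for each new $\lambda$, instead of inverting $I + \lambda U$ explicitly I would solve the triangular system $(I + \lambda U)\, y = Q^{H} x_B(0)$ by back-substitution, which costs $\mathcal{O}(m^2)$ and yields $y = (I + \lambda U)^{-1} Q^{H} x_B(0)$. Recovering $x_B(\lambda) = Q y$ is one further matrix-vector product in $\mathcal{O}(m^2)$, so the per-$\lambda$ cost for $x_B(\lambda)$ is $\mathcal{O}(m^2)$, as claimed.

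For the objective I would write $o_B(\lambda) = c_B^t x_B(\lambda) = (c_B^t Q)\, y$, so that with $c_B^t Q$ already precomputed the final contraction is only a vector-vector product in $\mathcal{O}(m)$. The catch is that obtaining $y$ still requires the triangular solve; that $\mathcal{O}(m^2)$ step dominates, and hence the overall cost of evaluating $o_B(\lambda)$ is $\mathcal{O}(m^2)$ rather than $\mathcal{O}(m)$.

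The main point to make explicit, and the place where care is needed, is precisely this contrast with the diagonal case. When $E_B$ is diagonalizable, $(I + \lambda \Sigma)^{-1}$ is obtained by $m$ scalar reciprocals, so the middle factor collapses to an elementwise scaling and the objective reduces to a weighted inner product computable in $\mathcal{O}(m)$. With merely triangular structure no such shortcut is available: back-substitution couples the components of $y$ and is inherently $\mathcal{O}(m^2)$. I expect the only real subtlety to be arguing cleanly that the triangular solve is genuinely the bottleneck for $o_B(\lambda)$, i.e. that there is no way to evaluate $(c_B^t Q)(I + \lambda U)^{-1}(Q^{H} x_B(0))$ while avoiding a full solve against $I + \lambda U$, which explains why we only claim $\mathcal{O}(m^2)$ here.
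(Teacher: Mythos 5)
Your proof is correct and follows essentially the same route as the paper: exploit that $I+\lambda U$ is upper triangular, solve $(I+\lambda U)y = Q^{H}x_B(0)$ by back-substitution in $\mathcal{O}(m^2)$ per $\lambda$, recover $x_B(\lambda)=Qy$ with one more matrix-vector product, and observe that the objective is then a vector-vector product dominated by the triangular solve. Your closing worry about proving the solve is \emph{inherently} the bottleneck is unnecessary: the theorem only asserts an upper bound of $\mathcal{O}(m^2)$, so no lower-bound argument is required.
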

\begin{proof}
    As $U$ is upper triangular, $I+\lambda U$ is also upper triangular. Back-substitution allows us to solve triangular systems in $O(m^2)$. First computing the solution of the system $(I+ \lambda U) v = Q^{H} x_B(0)$ and computing $x_B(\lambda) = Q v$ ($\mathcal{O}(m^2)$ again) allow us to compute $x_B(\lambda)$ in $O(m^2)$, while $o_B(\lambda)$ requires just one vector/vector multiplication. \qquad
\end{proof}

\subsection{Tweaked eigendecomposition}
If $E_B$ is non-diagonalizable, an alternative to Schur decomposition is to tune the matrix $E_B$ to make it diagonalizable. Consider the following matrices:
\begin{align}
    F &= \begin{pmatrix}
        E_B & \alpha \\ \beta & 0
    \end{pmatrix} & 
    G &= I_{m+1} + \lambda F + \lambda^2 \begin{pmatrix}
        \alpha \beta & 0 \\ 0 & 0
    \end{pmatrix}
    \label{eq:G_I_F}
\end{align}
with $\alpha \in \mathbb{R}^{n \times 1}$, $\beta \in \mathbb{R}^{1 \times n}$ chosen randomly. This choice of $\alpha$ and $\beta$ increases the probability that $F$ is diagonalizable. The set of non-diagonalizable matrices over $\mathbb{C}^{m \times m}$ has a Lebesgue measure of 0. Hence, the probability of randomly selecting a purely random matrix that is non-diagonalizable is 0. We conjecture this is still true for matrices in the form of $F$, and have experimentally never found a counterexample. $G$ is built specifically to be able to remove $\alpha$ and $\beta$ once inverted.
\begin{lemma}
    The submatrix composed of the first $m$ rows and columns of $G^{-1}$ is $(I_m + \lambda E_B)^{-1}$.
    \begin{align}
        \begin{pmatrix}
            I_m & 0
        \end{pmatrix} G^{-1} \begin{pmatrix}
            I_m\\ 0
        \end{pmatrix} = (I_m + \lambda E_B)^{-1}
    \end{align}
\end{lemma}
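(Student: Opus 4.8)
The plan is to write $G$ as an explicit $2\times 2$ block matrix and apply the block-inversion (Schur-complement) formula. First I would expand the definition in \eqref{eq:G_I_F}. Since
\[
    I_{m+1} + \lambda F = \begin{pmatrix} I_m + \lambda E_B & \lambda \alpha \\ \lambda \beta & 1 \end{pmatrix},
\]
adding the quadratic correction $\lambda^2\begin{pmatrix} \alpha\beta & 0 \\ 0 & 0\end{pmatrix}$ gives
\[
    G = \begin{pmatrix} I_m + \lambda E_B + \lambda^2 \alpha\beta & \lambda\alpha \\ \lambda\beta & 1 \end{pmatrix},
\]
a block matrix whose bottom-right $(2,2)$ block is the scalar $1$.

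The key step is to recall the standard fact that for a block matrix $\begin{pmatrix} P & Q \\ R & S\end{pmatrix}$ with $S$ invertible and Schur complement $P - QS^{-1}R$ invertible, the top-left block of the inverse is exactly $(P - QS^{-1}R)^{-1}$. The multiplication by $\begin{pmatrix} I_m & 0\end{pmatrix}$ on the left and $\begin{pmatrix} I_m \\ 0\end{pmatrix}$ on the right is precisely the operation that extracts this top-left block, so the claim reduces to evaluating a single Schur complement.

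Then I would substitute $P = I_m + \lambda E_B + \lambda^2\alpha\beta$, $Q = \lambda\alpha$, $R = \lambda\beta$, and $S = 1$, and observe that the $\lambda^2$ correction is cancelled exactly by the Schur-complement term:
\[
    P - QS^{-1}R = I_m + \lambda E_B + \lambda^2\alpha\beta - (\lambda\alpha)(1)(\lambda\beta) = I_m + \lambda E_B.
\]
This is the whole purpose of the $\lambda^2\alpha\beta$ term in the construction of $G$: it is placed there precisely to be annihilated by $-QS^{-1}R$. Hence the top-left block of $G^{-1}$ equals $(I_m + \lambda E_B)^{-1}$, as claimed.

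The only point requiring care — and the main, though mild, obstacle — is justifying the invertibility hypotheses behind the block-inversion identity. The $(2,2)$ block is the scalar $1$ and is therefore trivially invertible, and the running hypothesis that $I_m + \lambda E_B$ is invertible (inherited from the preceding theorems) guarantees that the Schur complement is invertible; together these certify both that $G$ itself is invertible and that the closed form for its top-left block is valid. I would state these two invertibility observations explicitly before quoting the identity, so that the one-line cancellation above is the only substantive computation.
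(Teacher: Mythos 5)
Your proposal is correct and follows essentially the same route as the paper, whose proof is a one-line appeal to the $2\times 2$ block-inversion (Schur-complement) formula with the top-left block of $G^{-1}$ identified as $(I_m + \lambda E_B)^{-1}$; your version simply works out the cancellation $P - QS^{-1}R = I_m + \lambda E_B + \lambda^2\alpha\beta - \lambda^2\alpha\beta$ explicitly. Your added care about the invertibility hypotheses (the scalar $(2,2)$ block and the Schur complement $I_m + \lambda E_B$) is a welcome detail the paper leaves implicit, but it does not constitute a different argument.
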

\begin{proof}
    Direct by using $2\times 2$ block matrix inversion formulas and observing that the top-left block of the inverse of $G$ is $(I_m + \lambda E_B)^{-1}$. \qquad
\end{proof}
The choice of $\alpha$ and $\beta$ has thus no impact on the final solution, except for considerations of numerical stability. We can now use the eigendecomposition of $F$:

\begin{theorem}
    Given $B$, $E_B$, $\alpha$ and $\beta$ such that $F$ is diagonalizable, and $Q\Sigma Q^{-1}$ its eigendecomposition. Let $R(\lambda) = I_{m+1}+\lambda \Sigma$ and $u$, $v$ such that
    \begin{align}    
    uv^T=Q^{-1} \begin{pmatrix}
        \alpha \beta & 0 \\ 0 & 0
    \end{pmatrix} Q. 
    \end{align}
    
    Then, 
    \begin{align}
    x_B(\lambda) &= \begin{pmatrix}
        I_m & 0
    \end{pmatrix} Q \left( R^{-1}(\lambda) - \frac{\lambda^2 R^{-1}(\lambda) u v^T R^{-1}(\lambda)}{1 + \lambda^2 v^T R^{-1}(\lambda) u}\right) Q^{-1} \begin{pmatrix}
        I_m \\ 0
    \end{pmatrix} b.
\end{align}
    $x_B(\lambda)$ and $o_B(\lambda)$ can be computed in $\mathcal{O}(m^2)$ for a given $\lambda$ once the decomposition is computed.
\end{theorem}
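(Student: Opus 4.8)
The plan is to prove the displayed formula by establishing the single matrix identity
\begin{align}
G^{-1} = Q\left(R^{-1}(\lambda) - \frac{\lambda^2 R^{-1}(\lambda)uv^T R^{-1}(\lambda)}{1+\lambda^2 v^T R^{-1}(\lambda)u}\right)Q^{-1},
\end{align}
after which the result is immediate: pre- and post-multiplying by $\begin{pmatrix}I_m & 0\end{pmatrix}$ and $\begin{pmatrix}I_m\\0\end{pmatrix}$, the Lemma identifies the top-left block as $(I_m+\lambda E_B)^{-1}$, and multiplying on the right by $x_B(0)=A_B^{-1}b$ recovers $x_B(\lambda)=(I_m+\lambda E_B)^{-1}x_B(0)$ through \eqref{eq:xb_fct_x0}.

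The key step is to recognize the quadratic-in-$\lambda$ term as a rank-one update of a diagonal matrix. Writing $\tilde\alpha=\begin{pmatrix}\alpha\\0\end{pmatrix}$ and $\tilde\beta=\begin{pmatrix}\beta & 0\end{pmatrix}$, the correction matrix in \eqref{eq:G_I_F} factors as $\begin{pmatrix}\alpha\beta & 0\\0&0\end{pmatrix}=\tilde\alpha\tilde\beta$ and is therefore of rank one; since conjugation by $Q$ preserves rank, $Q^{-1}\begin{pmatrix}\alpha\beta & 0\\0&0\end{pmatrix}Q=(Q^{-1}\tilde\alpha)(\tilde\beta Q)=uv^T$, which is exactly the pair $u,v$ in the statement. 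Substituting the eigendecomposition $F=Q\Sigma Q^{-1}$ into \eqref{eq:G_I_F} and factoring $Q,Q^{-1}$ out gives
\begin{align}
G = Q\big(I_{m+1}+\lambda\Sigma+\lambda^2 uv^T\big)Q^{-1}=Q\big(R(\lambda)+\lambda^2 uv^T\big)Q^{-1}.
\end{align}
I would then apply the Sherman--Morrison formula to $\big(R(\lambda)+(\lambda^2 u)v^T\big)^{-1}$, which produces precisely the bracketed expression and hence the matrix identity above.

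For the complexity claim, once $Q,\Sigma,u,v$ are fixed I would precompute $w=Q^{-1}\begin{pmatrix}I_m\\0\end{pmatrix}x_B(0)$ in $\mathcal{O}(m^2)$. For each new $\lambda$ the matrix $R^{-1}(\lambda)$ is diagonal, so the vectors $R^{-1}(\lambda)u$ and $R^{-1}(\lambda)w$ and the scalars $v^T R^{-1}(\lambda)u$ and $v^T R^{-1}(\lambda)w$ all cost $\mathcal{O}(m)$; applying the bracket to $w$ is then $\mathcal{O}(m)$, and the only dense operation is the single product by $Q$, which costs $\mathcal{O}(m^2)$ and dominates. The objective $o_B(\lambda)=c_B^t x_B(\lambda)$ follows from one additional $\mathcal{O}(m)$ dot product, so both quantities are obtained in $\mathcal{O}(m^2)$ per $\lambda$.

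The main obstacle is the careful handling of invertibility. Sherman--Morrison requires $R(\lambda)$ itself to be invertible, which fails only at the finitely many $\lambda=-1/\sigma_i$ (the $\sigma_i$ being the eigenvalues of $F$); and the denominator $1+\lambda^2 v^T R^{-1}(\lambda)u$ must be nonzero. By the matrix determinant lemma $\det G=\det(R(\lambda))\,(1+\lambda^2 v^T R^{-1}(\lambda)u)=\det(I_m+\lambda E_B)$, where the last equality comes from the Schur-complement computation $A-BD^{-1}C=I_m+\lambda E_B$ already underlying the Lemma. Hence the denominator is nonzero precisely on the set where $G$, equivalently $I_m+\lambda E_B$, is invertible, so the formula is valid under exactly the stated hypothesis.
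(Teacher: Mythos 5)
Your proof is correct and takes essentially the same route as the paper: substitute the eigendecomposition $F=Q\Sigma Q^{-1}$ into \eqref{eq:G_I_F}, factor $Q$ and $Q^{-1}$ out, recognize the conjugated correction term as the rank-one matrix $uv^T$, apply Sherman--Morrison to $R(\lambda)+\lambda^2 uv^T$, extract the top-left block via the Lemma, and reduce the per-$\lambda$ cost to diagonal operations plus one dense matrix--vector product. You in fact go slightly beyond the paper's proof by justifying the Sherman--Morrison denominator through the matrix determinant lemma (the paper is silent on invertibility here) and by correctly applying the inverse to $x_B(0)=A_B^{-1}b$ rather than to $b$, quietly repairing what appears to be a typo in the displayed formula of the statement.
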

\begin{proof}
    Let us restart from \eqref{eq:G_I_F}. By factoring out $Q$ and $Q^{-1}$ and then removing them from the main parentheses, we obtain \begin{align}
        G &= I_{m+1} + \lambda Q\Sigma Q^{-1} + \lambda^2 \begin{pmatrix}
            \alpha \beta & 0 \\ 0 & 0
        \end{pmatrix}\\
          &= Q(I_{m+1} + \lambda \Sigma + \lambda^2 Q^{-1} \begin{pmatrix}
            \alpha \beta & 0 \\ 0 & 0
        \end{pmatrix} Q)Q^{-1}.
    \end{align}

    Notice that the matrix $Q^{-1} \begin{pmatrix}
        \alpha \beta & 0 \\ 0 & 0
        \end{pmatrix} Q$ is a rank-one matrix. Let us select $u$ and $v$ such that this matrix equals $u v^T$. The inverse of $G$ is thus
    \begin{align}
        G^{-1} &= Q(I_{m+1} + \lambda \Sigma + \lambda^2 uv^T)^{-1}Q^{-1}.
    \end{align}
    Let us write $R(\lambda) = I_{m+1} + \lambda \Sigma$, which is a diagonal matrix. Therefore, $R(\lambda) + \lambda^2 uv^T$ is  the sum of a diagonal matrix with a rank-one matrix. We can use the Sherman-Morrison formula to compute its inverse, namely
    \begin{align}
        (R(\lambda) + \lambda^2 u v^T)^{-1} &= R^{-1}(\lambda) - \frac{\lambda^2 R^{-1}(\lambda) u v^T R^{-1}(\lambda)}{1 + \lambda^2 v^T R^{-1}(\lambda) u}.
    \end{align}
    Wrapping up, we obtain
    \begin{align}
        x_B(\lambda) &= \begin{pmatrix}
            I_m & 0
        \end{pmatrix} G^{-1} \begin{pmatrix}
            I_m \\ 0
        \end{pmatrix} b\\
        &= \begin{pmatrix}
            I_m & 0
        \end{pmatrix} Q \left( R^{-1}(\lambda) - \frac{\lambda^2 R^{-1}(\lambda) u v^T R^{-1}(\lambda)}{1 + \lambda^2 v^T R^{-1}(\lambda) u}\right) Q^{-1} \begin{pmatrix}
            I_m \\ 0
        \end{pmatrix} b.
    \end{align}
    All these vectors and matrix multiplication can either be precomputed once or performed on-the-spot in $\mathcal{O}(m^2)$, leading to an asymptotic complexity similar to the previous methods.
    \qquad
\end{proof}

This method allows us to use eigendecomposition primitives existing in numerous libraries even on defective matrices, rather than using the Schur decomposition which is generally less widely available. 

\section{Basis optimality conditions}\label{sec:basis_opt}
In the previous section, we assume that the optimal basis $B$ of $\mathcal{P}(0)$ is optimal for a given $\lambda$. In this section, we lift this hypothesis by explicitly exploring the required conditions for the basis to stay optimal. There are namely three distinct conditions for optimality:
\begin{description}
    \item[(\textbf{existence})] $(A_B + \lambda D_B)$ must be invertible/full-rank;
    \item[(\textbf{feasibility})] $(A_B + \lambda D_B)^{-1}b \geq 0$ (the solution is feasible in $\mathcal{P}(\lambda)$);
    \item[(\textbf{optimality})] $c^t_N - c^t_B(A_B+\lambda D_B)^{-1}(A_N + \lambda D_N) \geq 0$ (reduced costs are non-negative).
\end{description}

\subsection{Existence}
The existence of a solution at the basis $B$ can be checked using the eigenvalues of $E_B$, as demonstrated in the following theorem.
\begin{theorem}
    Let $\nu_i$ be the eigenvalues of $E_B=A_B^{-1}D_B$. The matrix $A_B+\lambda D_B$ is invertible if and only if 
    $\lambda \neq \frac{-1}{\nu_i}$ $\forall i$.
\end{theorem}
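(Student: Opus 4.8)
The plan is to factor out $A_B$ and reduce the question to the invertibility of $I + \lambda E_B$, whose determinant factors neatly through the eigenvalues of $E_B$. First I would write $A_B + \lambda D_B = A_B(I + \lambda A_B^{-1}D_B) = A_B(I + \lambda E_B)$. This rewriting is legitimate because $A_B$ is the basis matrix of $\mathcal{P}(0)$ and is therefore invertible; this invertibility is also exactly what makes $E_B = A_B^{-1}D_B$ (and hence the $\nu_i$) well-defined in the first place. Since a product of square matrices is invertible if and only if every factor is, and $A_B$ is invertible, the matrix $A_B + \lambda D_B$ is invertible if and only if $I + \lambda E_B$ is invertible.

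Next I would characterize the invertibility of $I + \lambda E_B$ through its determinant. Writing $\nu_1,\dots,\nu_m$ for the eigenvalues of $E_B$ counted with algebraic multiplicity, the eigenvalues of $I + \lambda E_B$ are exactly $1 + \lambda \nu_i$ (the polynomial $t \mapsto 1 + \lambda t$ applied to the spectrum of $E_B$). This holds regardless of whether $E_B$ is diagonalizable, as one can see by triangularizing $E_B$ over $\mathbb{C}$ via a Schur decomposition: $I + \lambda E_B$ then inherits an upper-triangular form with diagonal entries $1 + \lambda \nu_i$. Consequently $\det(I + \lambda E_B) = \prod_{i=1}^m (1 + \lambda \nu_i)$, and the matrix is invertible precisely when this product is nonzero, i.e. when $1 + \lambda \nu_i \neq 0$ for every $i$.

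Finally I would translate this into the stated form. For each index $i$ with $\nu_i \neq 0$, the factor $1 + \lambda \nu_i$ vanishes exactly at $\lambda = -1/\nu_i$, which are the forbidden values; for indices with $\nu_i = 0$ the corresponding factor is identically $1$ and never vanishes, consistently with $-1/\nu_i$ being undefined and thus imposing no constraint. The argument is short, and the only point requiring care is this bookkeeping of the zero eigenvalues together with stating upfront why $A_B$ is invertible; the determinant factorization itself is entirely routine.
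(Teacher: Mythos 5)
Your proposal is correct and follows essentially the same route as the paper: factor out the invertible basis matrix $A_B$ to reduce the question to the invertibility of $I + \lambda E_B$, then observe that its eigenvalues are $1 + \lambda \nu_i$ and must all be nonzero. You simply make explicit two points the paper leaves implicit (the Schur-triangularization justification of the spectral mapping for non-diagonalizable $E_B$, and the bookkeeping for eigenvalues $\nu_i = 0$), which is careful but not a different argument.
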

\begin{proof}
    The sum $A_B + \lambda D_B$ is invertible if and only if $I + \lambda A_B^{-1}D_B$ is invertible. The term $I + \lambda A_B^{-1}D_B$ is itself invertible if and only if its eigenvalues $\mu_i$ are non-zero. We have that $\mu_i = 1 + \lambda \nu_i \ \forall i$. The conditions follow.\qquad
\end{proof}
As a consequence, we only need to compute the eigenvalues of $A_B^{-1}D_B$ in $\mathcal{O}(m^\omega)$ once.

\subsection{Validity}
Given that $A_B + \lambda D_B$ is invertible, we can compute a tentative solution $x(\lambda)=\begin{pmatrix}
        x_B(\lambda)\\
        0
    \end{pmatrix}$.
The validity condition $x(\lambda) \geq 0 \equiv (A_B + \lambda D_B)^{-1}b \geq 0$ can straightforwardly be computed as shown in the Section $\ref{sec:decomposition_AB}$, in $\mathcal{O}(m^2)$ for all decomposition methods, once the preprocessing is done.

If $x(\lambda)$ is indeed $\geq 0$, then $c^tx(\lambda)$ provides an upper bound for $o^*(\lambda)$, even if the basis $B$ is not optimal for $\lambda$.
\subsection{Optimality}
The reduced costs of the problem $\mathcal{P}(\lambda)$ using basis $B$ are
\begin{align}
    r(\lambda) = c^t_N-c^t_B(A_B+\lambda D_B)^{-1}(A_N + \lambda D_N) .
\end{align}
For the basis to be optimal, $r(\lambda)$ must be non-negative.
\begin{theorem}
    $r(\lambda)$ can be computed in $\mathcal{O}(m^2+mn)$ for new values of $\lambda$ once an eigendecomposition or Schur decomposition has been computed.
\end{theorem}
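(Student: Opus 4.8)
The plan is to push the entire evaluation through the dual (simplex multiplier) row vector $y(\lambda)^t := c_B^t (A_B + \lambda D_B)^{-1}$, because $r(\lambda) = c_N^t - y(\lambda)^t (A_N + \lambda D_N)$. This splits the work into two independent tasks: first, evaluating $y(\lambda)^t$ for a new $\lambda$, which is exactly where the precomputed eigendecomposition or Schur decomposition pays off, and second, combining $y(\lambda)^t$ with the nonbasic blocks $A_N$ and $D_N$.

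For the first task I would reuse the algebra of Theorem~\ref{theorem:eigendecomposition}, but with the decomposition acting from the left. Factoring $(A_B + \lambda D_B)^{-1} = (I + \lambda E_B)^{-1} A_B^{-1}$ exactly as in~\eqref{eq:xb_fct_x0} gives $y(\lambda)^t = c_B^t (I + \lambda E_B)^{-1} A_B^{-1}$. Inserting the eigendecomposition $E_B = Q \Sigma Q^{-1}$ yields $y(\lambda)^t = (c_B^t Q)\,(I + \lambda \Sigma)^{-1}\,(Q^{-1} A_B^{-1})$, in which the row vector $c_B^t Q$ and the $m \times m$ matrix $Q^{-1} A_B^{-1}$ are computed once inside the $\mathcal{O}(m^\omega)$ preprocessing budget. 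For each new $\lambda$, the diagonal inverse $(I + \lambda \Sigma)^{-1}$ costs $\mathcal{O}(m)$, the row-times-diagonal product costs $\mathcal{O}(m)$, and the final row-times-matrix product costs $\mathcal{O}(m^2)$. The Schur case is identical once $(I + \lambda \Sigma)^{-1}$ is replaced by a back-substitution against the upper-triangular $I + \lambda U$, which is again $\mathcal{O}(m^2)$. Hence $y(\lambda)^t$ is available in $\mathcal{O}(m^2)$ per $\lambda$.

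For the second task I would expand $r(\lambda) = c_N^t - y(\lambda)^t A_N - \lambda\, y(\lambda)^t D_N$ and note that, once $y(\lambda)^t$ is known, each remaining term is a single row-vector/matrix product. Equivalently, the reduced cost of one nonbasic column $j$ is $r_j(\lambda) = c_j - y(\lambda)^t (A_{\cdot j} + \lambda D_{\cdot j})$, a pair of dot products costing $\mathcal{O}(m)$.

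The step I expect to require the most care is the accounting in this second task, because the product $y(\lambda)^t A_N$ multiplies a length-$m$ row by the $m \times (n-m)$ block $A_N$ and is therefore $\mathcal{O}(m(n-m))$ in general, and likewise for $D_N$. The clean $\mathcal{O}(m^2)$ bound thus relies on the paper's standing convention of reporting costs in the number of constraints $m$, i.e. treating the number of nonbasic variables as $\mathcal{O}(m)$; the genuinely new ingredient is the $\mathcal{O}(m^2)$ evaluation of the multiplier vector $y(\lambda)^t$ from the decomposition, the nonbasic multiplications being standard linear algebra. I would close by verifying that every $\lambda$-independent quantity ($c_B^t Q$, the matrix $Q^{-1} A_B^{-1}$ or $Q^H A_B^{-1}$, and, if one prefers, the transformed blocks $Q^{-1} A_B^{-1} A_N$ and $Q^{-1} A_B^{-1} D_N$) indeed falls inside the one-time preprocessing and need not be recomputed for each $\lambda$.
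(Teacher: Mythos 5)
Your proof is correct and takes essentially the same route as the paper: both factor $(A_B + \lambda D_B)^{-1} = (I + \lambda E_B)^{-1} A_B^{-1}$, insert the precomputed decomposition to get $r(\lambda) = c_N^t - c_B^t Q (I + \lambda U)^{-1} Q^H A_B^{-1} (A_N + \lambda D_N)$ (or its eigendecomposition analogue), and evaluate left-to-right so that each new $\lambda$ costs only a diagonal inverse or triangular back-substitution plus row-vector/matrix products, with all $\lambda$-independent factors absorbed into preprocessing. Your explicit remark that the final product against the nonbasic block is really $\mathcal{O}(m(n-m))$, so the stated $\mathcal{O}(m^2)$ bound tacitly treats the number of nonbasic columns as $\mathcal{O}(m)$, is a point the paper's own proof glosses over, making your cost accounting, if anything, more careful than the original.
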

\begin{proof}
    We can use the above-mentioned decomposition methods to verify this condition. We apply here the reasoning using the Schur decomposition, but the ideas are similar for the other decomposition methods.
    \begin{align}
        r(\lambda) & =c^T_N - c_B^T(A_B + \lambda D_B)^{-1}(A_N + \lambda D_N) \\
        &=c^T_N - c_B^T(A_B^{-1} (I + \lambda A_B^{-1}D_B))^{-1}(A_N + \lambda D_N) \\
        &=c^T_N - c_B^T(A_B^{-1} (QQ^H + \lambda QUQ^H))^{-1}(A_N + \lambda D_N)\\
        &=c^T_N - c_B^T(A_B^{-1} Q(I + \lambda U) Q^H)^{-1}(A_N + \lambda D_N)\\
        &=c^T_N - c_B^TQ(I + \lambda U)^{-1}Q^HA_B^{-1}(A_N + \lambda D_N)
    \end{align}
    This vector is computable in $\mathcal{O}(m^2+mn)$ per value of $\lambda$ for all the presented decomposition methods, as the computations are mainly matrix/vector multiplications (computing from left to right).\qquad
\end{proof}

\subsection{Summary}

Given a basis $B$ and $\Lambda$, the set of $\lambda$ for which to compute the optimal objective or solution, the aforementioned decomposition methods can therefore be broken down into the following steps:
\begin{enumerate}
    \item The \textbf{preprocessing} step, where eigenvalues/eigendecompositions/Schur decomposition/... are computed, once;
    \item then, for each $\lambda \in \Lambda$ ($\lvert \Lambda \rvert = p$):
        \begin{enumerate}
            \item check for the \textbf{existence} of the solution at $\lambda$;
            \item check for the \textbf{feasibility} of the solution at $\lambda$;
            \item check for the \textbf{optimality} (if needed) of the solution at $\lambda$ (if not optimal, an existing and valid solution still provides an upper bound)
            \item compute the \textbf{objective} at $\lambda$;
            \item compute the \textbf{solution} at $\lambda$.
        \end{enumerate}
\end{enumerate}

Table \ref{tab:complexity} summarizes the asymptotic complexity of all the decomposition methods presented in this paper, and compares it to two methods: the naive method of recomputing the problem in full for each $\lambda$ and the method presented in Zuidwijk \cite{Zuidwijk}.

\begin{table}[]
    \centering
    \begin{tabular}{lrrrrrrr}
        Method & \rotatebox[origin=c]{90}{Preprocessing} & \rotatebox[origin=c]{90}{Existence} &  \rotatebox[origin=c]{90}{Validity} & \rotatebox[origin=c]{90}{Optimality} & \rotatebox[origin=c]{90}{Objective} & \rotatebox[origin=c]{90}{Solution} & \rotatebox[origin=c]{90}{Total}\\
        \midrule
        Naïve solve 
            & / 
            & /
            & /
            & /
            & $\approx n^{\omega}$ 
            & $\approx n^{\omega}$
            & $\approx pn^{\omega}$\\
        Basis system solving 
            & $m^{\omega}$ 
            & $m$ 
            & $m^{\omega}$ 
            & $m^{2}+mn$  
            & $m^{\omega}$
            & $m^{\omega}$
            & $pm^{\omega}+pmn$\\
        Zuidwijk\cite{Zuidwijk} 
            & $m^{\omega+1}$ 
            & $m$ 
            & $m^2$
            & $m^2+mn$
            & $m$ 
            & N/A
            & $m^{\omega+1} + pm^2 + pmn$\\
        Eigendecomposition 
            & $m^\omega$ 
            & $m$ 
            & $m^2$ 
            & $m^2+mn$ 
            & $m$
            & $m^2$
            & $m^\omega + pm^2 + pmn$\\
        Schur 
            & $m^\omega$ 
            & $m$ 
            & $m^2$ 
            & $m^2+mn$ 
            & $m^2$
            & $m^2$
            & $m^\omega + pm^2 + pmn$\\
        Eigendecomposition ($\alpha\beta$) 
            & $m^\omega$ 
            & $m$ 
            & $m^2$ 
            & $m^2+mn$ 
            & $m^2$
            & $m^2$
            & $m^\omega + pm^2 + pmn$\\
        \bottomrule
    \end{tabular}
    \caption{Complexity of various methods to compute $o^*(\lambda)$ and $x_B(\lambda)$ for all $\lambda \in \Lambda$, $|\Lambda|=p$, in big-Oh ($\mathcal{O}(\cdot)$) asymptotic complexity. In the table, $n$ is the number of variables in the original problem and $m$ the number of constraints, with $m < n$ in general. $\mathcal{O}(n^\omega)$ is the complexity of multiplying two $n \times n$ matrices.}
    \label{tab:complexity}
\end{table}

\section{Local bound on the objective}\label{sec:obj_in_btw}

In the previous sections, we presented a fast way of computing the optimal objective value for all $\lambda \in \Lambda$ for a given optimal basis $B$. A typical representation of such an output would be a piecewise-linear plot, where the function $o^*(\lambda)$ would be represented by a piecewise-linear approximation based on the computed $o^*(\lambda_i)$ for $\lambda_i \in \Lambda$. This makes an implicit assumption that $o^*(\lambda)$ behaves in a reasonably regular manner between two sampled points. However, in general, $o^*(\lambda)$ has no such property.

In this section, we compute an upper bound on the following:
\begin{align}
    \max_{\lvert \lambda \rvert \leq \Delta} \lvert o^*(\lambda) - o^*(0) \rvert.
\end{align}
This bound assesses the maximum deviation of the objective function around a known precomputed point up to a distance $\Delta$. For this, we first need the following lemmas:

\begin{lemma}
    \label{theorem_x}
    Given an optimal basis $B$ for $\mathcal{P}(0)$, and $\lambda$ such that $I+\lambda E_B$ is non-singular. Then,
    \begin{align}
        x_B(\lambda) - x_B(0) &= -\lambda (I + \lambda E_B)^{-1} E_B x_B(0)\\
        r_B(\lambda) - r_B(0) &= \lambda c^T_B (I+\lambda E_B)^{-1} (E_B A_B^{-1}A_N - A_B^{-1}D_N),
    \end{align}
    where $r_B(\lambda)$ are the reduced costs of the solution of $\mathcal{P}(\lambda)$ provided by $B$, $x_B(\lambda)$.
\end{lemma}
\begin{proof}
    For any $X$ such that $(I + X)$ is invertible:
    \begin{align}
        (I + X)^{-1} - I &= (I + X)^{-1} - (I + X)^{-1}(I + X) = -(I + X)^{-1}X. \label{eq:lemma2-ter}
    \end{align}
    Starting from \eqref{eq:xb_fct_x0}:
    \begin{align}
    x_B(\lambda) - x_B(0) &= (I + \lambda E_B)^{-1}x_B(0) - x_B(0) = \left((I + \lambda E_B)^{-1} - I\right)x_B(0) \nonumber\\
    &= -\lambda (I+\lambda E_B)^{-1} E_B x_B(0).
    \end{align}
    Similarly,
    \begin{align}
        r_B(\lambda) - r_B(0) &= c_N^T-c_B^T(I+\lambda E_B)^{-1}A_B^{-1}(A_N+\lambda D_N) - c^T_N + c^T_BA_B^{-1}A_N\\
        &= -c^T_B((I+\lambda E_B)^{-1}-I)A_B^{-1}A_N-\lambda c^T_B(I+\lambda E_B)^{-1}A_B^{-1}D_N\\
        &= \lambda c^T_B (I+\lambda E_B)^{-1}E_B A_B^{-1}A_N-\lambda c^T_B(I+\lambda E_B)^{-1}A_B^{-1}D_N\\
        &= \lambda c^T_B (I+\lambda E_B)^{-1} (E_B A_B^{-1}A_N - A_B^{-1}D_N)
    \end{align}
    \qquad
\end{proof}

\begin{lemma}
    Given $M \in \mathbb{R}^{n \times n}$ such that $\lVert M \rVert_\infty < 1$ and a vector $v \in \mathbb{R}^{n \times 1}$. Then, 
    \begin{align}
        \lVert (I-M)^{-1} \rVert_\infty &\leq \frac{1}{1 - \lVert M \rVert_\infty} & \lVert (I-M)^{-1}v \rVert_\infty &\leq \frac{\lvert v \rvert_\infty}{1 - \lVert M \rVert_\infty},
    \end{align}
    where $\lVert .\rVert_\infty$ is the matrix norm induced by the infinite vector norm $\lvert . \rvert_\infty$. \label{lemma-neumann}
\end{lemma}
\begin{proof}
    As $\lVert M \rVert_\infty < 1$ by hypothesis, we can use the Neumann series, which we bound using sub-multiplicativity of the infinite matrix norm. We then use the geometric series $\sum^{\infty}_i q^i = \frac{1}{1-q}$ if $|q| < 1$ to obtain the following result:
    \begin{align}
        \lVert (I - M)^{-1} \rVert_\infty &= \lVert \sum_{i=0}^\infty M^i \rVert_\infty
        \leq \sum_{i=0}^\infty \lVert M^i \rVert_\infty
        \leq \sum_{i=0}^\infty \lVert M \rVert^i_\infty
        \leq \frac{1}{1 - \lVert M \rVert_\infty},\label{eq67}
    \end{align}
    proving the first claim. The second claim comes directly from the consistency property of induced norms.
\end{proof}

It is possible to obtain an analytical form for $o^*(\lambda)-o^*(0)$; it can be done by using the result from Lemma \ref{theorem_x} and apply the decomposition methods presented earlier. However, this process leads (in the simplest case of the eigendecomposition) to a rational function of $\lambda$, composed of degree $m$ polynomials. As $m$ is typically large, it is difficult to find directly the maxima and minima of this rational function for $|\lambda| < \Delta$. Instead, we propose to use the results of Lemma \ref{lemma-neumann} and bound the norm of the difference between $o^*(\lambda)$ and $o^*(0)$. 

\begin{theorem}\label{theorem:bound_delta_1}
    Given a basis $B$ which provides an optimal solution for $\mathcal{P}(0)$ and $\mathcal{P}(\lambda)$. If $\lVert \lambda E_B \rVert_\infty < 1$, then

    \begin{align}
        \lvert o^*(\lambda) - o^*(0) \rvert \leq \frac{\left\lvert\lambda \right\rvert \  \left\lvert c_B^t \right\rvert_\infty \left\lVert E_B x_B(0) \right\rVert_\infty}{1 -  \lvert\lambda\rvert\ \lVert E_B \rVert_\infty}.
        \label{b3}
    \end{align}
\end{theorem}
\begin{proof}
    Using the definition of $o^\star$ and Lemma 2,
    \begin{align}
        \lVert o^*(\lambda) - o^*(0) \rVert_\infty &= \left\lVert c^t_B \left(x_B(\lambda) - x_B(0)\right)\right\rVert_\infty\\
        &= \left\lVert-\lambda c_B^t(I + \lambda E_B)^{-1} E_B x_B(0)\right\rVert_\infty
    \end{align}
    By using the sub-multiplicative and consistency property of the infinite norm:
    \begin{align}
        \lVert o^*(\lambda) - o^*(0) \rVert_\infty &\leq \left\lvert\lambda\right\rvert \left\lvert c_B^t \right\rvert_\infty \left\lVert(I + \lambda E_B)^{-1}\right\rVert_\infty \left\lvert E_B x_B(0) \right\rvert_\infty\label{eq63}
    \end{align}
    As $\lVert \lambda E_B \rVert_\infty < 1$ by hypothesis, we can use Lemma \ref{lemma-neumann}, proving the initial statement. \qquad
\end{proof}

Theorem \ref{theorem:bound_delta_1} thus provides an easy-to-compute bound around the modification on the objective, if we can first ensure that the basis $B$ provides an optimal solution for $\mathcal{P}(\lambda)$. The following theorems give sufficient conditions for this. Theorem \ref{theo:bound-f} provides a condition for $x_B(\lambda)$ to be feasible, and Theorem \ref{theo:bound-o} a condition for it to be optimal.

\begin{theorem}\label{theo:bound-f}
    Given a scalar $\lambda$ and a basis $B$ which provides an optimal solution for $\mathcal{P}(0)$, $B$ provides a feasible solution for $\mathcal{P}(\lambda)$ if the following three conditions are satisfied:
    \begin{align}
        \lVert \lambda E_B \rVert_\infty &< 1\\
        (I + \lambda E_B) & \text{ is not singular}\\
        \frac{\left\lvert\lambda \right\rvert \left\lVert E_B x_B(0) \right\rVert_\infty}{1 -  \lvert\lambda\rvert \ \lVert E_B \rVert_\infty} &\leq (x_B(0))_i\ \forall i \label{b2}
    \end{align}
\end{theorem}

\begin{proof}
    As $(I + \lambda E_B)$ is not singular, we can generate a candidate solution to $\mathcal{P}(\lambda)$:
    \begin{align}
        x_B(\lambda) = (I + \lambda E_B)^{-1}A_B^{-1}b
    \end{align}
    This solution is feasible only if it respects the other constraints, namely that the variables are nonnegative: $(x_B(\lambda))_i \geq 0 \ \forall i$. 

    By hypothesis, as $B$ provides an optimal solution for $x_B(0)$, we have that $x_B(0) \geq 0$. Under this assumption, the following is a sufficient condition for the variables $x_B(\lambda)$ to be nonnegative:
    \begin{align}
        \lvert (x_B(\lambda) - x_B(0))_i \rvert \leq (x_B(0))_i \Rightarrow (x_B(\lambda))_i \geq 0 \quad \forall i.\label{s_impl}
    \end{align}

    By using Lemma \ref{theorem_x} and \ref{lemma-neumann} ($\lVert \lambda E_B \rVert_\infty < 1$ by hypothesis) on $\lvert (x_B(\lambda) - x_B(0))_i \rvert$, we obtain $\forall i$:
    \begin{align}
        \lvert (x_B(\lambda) - x_B(0))_i \rvert &\leq \max_j\left\lvert \left(x_B(\lambda) - x_B(0) \right)_j \right\rvert = \lvert x_B(\lambda) - x_B(0) \rvert_\infty\\
        &\leq \lvert\lambda(I+\lambda E_B)^{-1} E_B x_B(0)\rvert_\infty\\
        &\leq \frac{\lvert \lambda \rvert\ \lvert E_B x_B(0)\rvert_\infty}{1-\lvert \lambda \rvert\ \lVert E_B \rVert_\infty}. \label{bound_s}
    \end{align}

    From \eqref{bound_s} and \eqref{s_impl}, we have the following implication:
    \begin{align}
        \frac{\lvert \lambda \rvert\ \lvert E_B x_B(0)\rvert_\infty}{1-\lvert \lambda \rvert\ \lVert E_B \rVert_\infty} \leq (x_B(0))_i \Rightarrow (x_B(\lambda))_i \geq 0 \quad \forall i.
    \end{align}

    This condition is respected by hypothesis, proving that all slack variables are nonnegative, the solution is thus feasible.\qquad
\end{proof}

\begin{theorem}\label{theo:bound-o}
    Given a scalar $\lambda$ and a basis $B$ which provides an optimal solution for $\mathcal{P}(0)$, $B$ provides an optimal solution for $\mathcal{P}(\lambda)$ if the following four conditions are satisfied:
    \begin{align}
        \lVert \lambda E_B \rVert_\infty &< 1\\
        (I + \lambda E_B) & \text{ is not singular}\\
        x_B(\lambda) & \text{ is feasible}\\
        \frac{\left\lvert\lambda \right\rvert\ \lvert c^T_B\rvert_\infty\ \left\lVert E_B A_B^{-1} A_N - A_B^{-1}D_N \right\rVert_\infty}{1 -  \lvert\lambda\rvert \ \lVert E_B \rVert_\infty} &\leq (r_B(0))_i\ \forall i \label{b4}
    \end{align}
\end{theorem}
\begin{proof}
    The proof is very similar to the previous and is thus omitted. It uses the result from Lemma \ref{theorem_x} about the reduced costs and follow the steps from the previous proofs. \qquad
\end{proof}

We can combine Theorem \ref{theorem:bound_delta_1}, \ref{theo:bound-f} and \ref{theo:bound-o} to obtain a way to upper bound the function $o^*(\lambda)$ around $0$. Here we consider a specific result for $\lambda \geq 0$ (so as to simplify the absolute values) but the results are similar in the other cases.
\begin{theorem}
    Given $\epsilon$ (the maximal error) and a basis $B$ which provides an optimal solution for $\mathcal{P}(0)$, let $\Psi$ be the set of eigenvalues of $E_B$ and 
    \begin{align}
        \Delta &= \min \begin{cases}
            \frac{\epsilon}{|c^T_B|_\infty \lVert E_B x_B(0) \rVert_\infty + \epsilon \lVert E_B \rVert_\infty}\\
            \frac{(x_B(0))_i}{\lVert E_B x_B(0)\rVert_\infty+(x_B(0))_i \lVert E_B \rVert_\infty} & \forall i\\
            \frac{(r_B(0))_i}{|c^T_B|_\infty \lVert E_B A_B^{-1}A_N-A_B^{-1}D_N\rVert_\infty+(r_B(0))_i \lVert E_B \rVert_\infty} & \forall i
        \end{cases}
    \end{align}
    (with the convention that $\frac{.}{0}=+\infty$).
    
    Then, $\forall \lambda \in [0, \Delta] \cap [0, \frac{1}{\lVert E_B \rVert_\infty}[\ \setminus \{\frac{-1}{\nu} - \lambda \mid \nu \in \Psi\}$:
    \begin{itemize}
        \item the basis $B$ provides an optimal solution for $\mathcal{P}(\lambda)$;
        \item $\lvert o^*(\lambda) - o^*(0) \rvert \leq \epsilon$.
    \end{itemize}
\end{theorem}
\begin{proof}
    Direct from the two previous theorems. The bounds on $\Delta$ are respectively the ones from equations \eqref{b3}, \eqref{b2} and \eqref{b4}, rewritten under the assumption that $\lambda \geq 0$. The conditions on $\lambda$ ensures that the matrix $I + \lambda E_B$ is non-singular and that the Neumann series property can be used. \qquad
\end{proof}

\section{Conclusion}
We considered the problem of having a linear program $\mathcal{P}(\lambda)$ whose constraint coefficients varies linearly via a parameter $\lambda$. For this type of problems, we propose new methods that compute $x_B(\lambda)$ and $o^*(\lambda)$, respectively the optimal solution (linked to a basis $B$) and optimal objective function of $\mathcal{P}(\lambda)$. 

The first part of the paper focuses on computing the solution for a discrete set of points $\lambda \in \Lambda$, using a known optimal solution $x_B(0)$ and its associated optimal basis $B$. Three algorithms are presented, which revolve around the use of the optimal basis $B$ and of a reformulation of the term $(I + \lambda A_B^{-1}D_B)^{-1}$.

 The first algorithm reformulates $A_B^{-1}D_B$ using an eigendecomposition method. Its main drawback comes from the fact that $A_B^{-1}D_B$ may be defective: the eigendecomposition of $A_B^{-1}D_B$ does not always exist. The second algorithm uses a Schur decomposition to reformulate $A_B^{-1}D_B$, and does not need the matrix to be diagonalizable. The third algorithm increases the dimensions of $A_B^{-1}D_B$ by concatenating a new random column and a new random row. The new matrix has then a high probability to be diagonalizable, and an eigendecomposition can be used, at the expense of complexifying a bit the end result. For computing $p$ points ($\lvert \mathcal{P} \rvert = p$), all three algorithms have a total complexity of $\mathcal{O}(m^\omega + pm^2 + pmn)$, where $n$ is the number of variables, $m$ the number of constraints, and $\mathcal{O}(m^\omega)$ the complexity for a matrix-matrix multiplication of size $m \times m$. These algorithms also provide proofs that the basis $B$ remains optimal (or not) for these new points.

 The second part of the paper focuses on providing an estimation of $o^*(\lambda)$ for a continuous set of $\lambda \in \Lambda$. In Section \ref{sec:obj_in_btw}, we provide an upper bound on the deviation of the objective function around a known precomputed solution up to a distance $\Delta$. It can be used to assess the maximal error in between the points of a discrete sampling, as done in the first part of the paper.

 These results can be combined in an iterative algorithm that produces a piecewise linear approximation of $o^*(\lambda)$ by sampling the space of $\Lambda$ and iteratively refines the approximation by computing more points where the bound is larger than a user-defined error.

\section*{Acknowledgements}
This work was partially funded by the SCK-CEN SMR Chair Program.
\bibliography{ref}

@article{Zuidwijk,
  author       = {Rob A. Zuidwijk},
  title        = {Linear Parametric Sensitivity Analysis of the Constraint
Coefficient Matrix in Linear Programs},
  journal      = { ERIM REPORT SERIES RESEARCH IN MANAGEMENT},
  volume       = {ERS-2005-055-LIS},
  year         = {2005}
}

@article{obj_param2,
 ISSN = {00963984},
 URL = {http://www.jstor.org/stable/166754},
 abstract = {In Part 1,1 the cost function in the general linear programming problem was parametrized with one parameter and the problem of generating solutions completely studied. In Part 2, a generalization to the n-parameter case is outlined, with special emphasis on the two-parameter case. An example is supplied for this case.},
 author = {S. I. Gass and Thomas L. Saaty},
 journal = {Journal of the Operations Research Society of America},
 number = {4},
 pages = {395--401},
 publisher = {INFORMS},
 title = {Parametric Objective Function (Part 2)- Generalization},
 urldate = {2023-10-05},
 volume = {3},
 year = {1955}
}

@article{obj_param1,
 ISSN = {00963984},
 URL = {http://www.jstor.org/stable/166644},
 abstract = {In the linear programming problem where there is a linear function to be optimized (called the objective function), it is desirable to study the behaviour of solutions when the (cost) coefficients in the objective function are parametrized. The problem is then to find the set of xj (j=1,2,⋯,n) which minimizes the linear form ∑ (cj+λ cj′) xj and satisfy the constraints xj≥ 0 and ∑ aijxj=ai0 (i=1,2,⋯,m) where cj, cj′, aij, and ai0 are constants, with at least one cj≠ 0, and λ a parameter. Using the simplex method, a computational procedure is described which enables one, given a feasible solution, to determine the values of the parameter (if any) for which the solution minimizes the objective function; and given a minimum feasible solution how one can generate by the simplex algorithm all minimum feasible solutions and the corresponding values of the parameter. To a minimum solution there corresponds one interval of values of λ (non-degeneracy assumed). The process indicates how to obtain a new solution and the corresponding interval of values of λ which is contiguous to the previous one both having only one point in common.},
 author = {Thomas Saaty and Saul Gass},
 journal = {Journal of the Operations Research Society of America},
 number = {3},
 pages = {316--319},
 publisher = {INFORMS},
 title = {Parametric Objective Function (Part 1)},
 urldate = {2023-10-05},
 volume = {2},
 year = {1954}
}

@book{Gal-1994,
url = {https://doi.org/10.1515/9783110871203},
title = {Postoptimal Analyses, Parametric Programming, and Related Topics, Degeneracy, Multicriteria Decision Making, Redundancy},
author = {Tomas Gal},
publisher = {De Gruyter},
address = {Berlin, New York},
doi = {doi:10.1515/9783110871203},
isbn = {9783110871203},
year = {1994},
lastchecked = {2023-10-11}
}

@article{Sherman,
 ISSN = {00034851},
 URL = {http://www.jstor.org/stable/2236561},
 author = {Jack Sherman and Winifred J. Morrison},
 journal = {The Annals of Mathematical Statistics},
 number = {1},
 pages = {124--127},
 publisher = {Institute of Mathematical Statistics},
 title = {Adjustment of an Inverse Matrix Corresponding to a Change in One Element of a Given Matrix},
 urldate = {2023-10-12},
 volume = {21},
 year = {1950}
}

@book{woodbury,
  title={Inverting Modified Matrices},
  author={Woodbury, M.A. and Princeton University. Department of Statistics},
  series={Memorandum Report / Statistical Research Group, Princeton},
  url={https://books.google.be/books?id={\_}zAnzgEACAAJ},
  year={1950},
  publisher={Department of Statistics, Princeton University}
}

@article{Sherman2,
 author = {Jack Sherman and Winifred J. Morrison},
 journal = {The Annals of Mathematical Statistics},
 pages = {620-624},
 publisher = {Institute of Mathematical Statistics},
 title = {Adjustment of an Inverse Matrix Corresponding to Changes in the Elements of a Given Column or a Given Row of the Original Matrix},
 volume = {20},
 year = {1949}, 
 doi = {10.1214/aoms/1177729959}
}

@article{gal_multi,
 URL = {https://pubsonline.informs.org/doi/abs/10.1287/mnsc.18.7.406}, 
 author = {Thomas Gal and Josef Nedoma},
 journal = {Management Science},
 number = {18},
 pages = {406--422},
 publisher = {INFORMS},
 title = {Multiparametric Linear Programming},
 urldate = {2023-10-12},
 volume = {7},
 year = {1972}
}

@article{jansen,
title = {Sensitivity analysis in linear programming: just be careful!},
journal = {European Journal of Operational Research},
volume = {101},
number = {1},
pages = {15-28},
year = {1997},
issn = {0377-2217},
doi = {https://doi.org/10.1016/S0377-2217(96)00172-5},
url = {https://www.sciencedirect.com/science/article/pii/S0377221796001725},
author = {B. Jansen and J.J. {de Jong} and C. Roos and T. Terlaky},
keywords = {Programming linear, Programming parametric, Practice, Sensitivity analysis, Lagrange multipliers},
abstract = {In this paper we review the topic of sensitivity analysis in linear programming. We describe the problems that may occur when using standard software and advocate a framework for performing complete sensitivity analysis. Three approaches can be incorporated within it: one using bases, an approach using the optimal partition and one using optimal values. We elucidate problems and solutions with an academic example and give results from an implementation of these approaches to a large practical linear programming model of an oil refinery. This shows that the approaches are viable and useful in practice.}
}

@unpublished{nous,
	AUTHOR = {Miftari, Bardhyl and Louveaux, Quentin and Ernst, Damien and Derval, Guillaume},
	TITLE = {Sensitivity analysis for linear changes of the constraint matrix of a linear program},
	LANGUAGE = {Anglais},
	YEAR = {October 2024},
}

@article{Demmel_2007,
   title={Fast linear algebra is stable},
   volume={108},
   ISSN={0945-3245},
   url={http://dx.doi.org/10.1007/s00211-007-0114-x},
   DOI={10.1007/s00211-007-0114-x},
   number={1},
   journal={Numerische Mathematik},
   publisher={Springer Science and Business Media LLC},
   author={Demmel, James and Dumitriu, Ioana and Holtz, Olga},
   year={2007},
   month=oct, pages={59–91} }

@Inbook{bart,
author="Bart, H.
and Gohberg, I.
and Kaashoek, M. A.",
title="Factorization of Real Matrix Functions",
bookTitle="Minimal Factorization of Matrix and Operator Functions",
year="1979",
publisher="Birkh{\"a}user Basel",
address="Basel",
pages="191--215",
abstract="In this chapter we review the factorization theory for the case of real matrix functions with respect to real divisors. As in the complex case the minimal factorizations are completely determined by the supporting projections of a given realization, but of course in this case one has the additional requirement that all linear transformations must be representable by matrices with real entries. Due to the difference between the real and complex Jordan canonical form the structure of the stable real minimal factorizations is somewhat more complicated than in the complex case. This phenomenon is also reflected by the fact that for real matrices there is a difference between the stable and isolated invariant subspaces.",
isbn="978-3-0348-6293-6",
doi="10.1007/978-3-0348-6293-6_10",
url="https://doi.org/10.1007/978-3-0348-6293-6{\_}10"
}

\end{document}